\documentclass[a4paper]{amsart}

\usepackage[french]{babel}
\usepackage[utf8]{inputenc}
\usepackage[T1]{fontenc}
\usepackage{lmodern} % d'autres polices sont possibles

\usepackage{enumerate}

\newcommand{\N}{\mathbb{N}}
\newcommand{\Z}{\mathbb{Z}}
\newcommand{\Q}{\mathbb{Q}}
\newcommand{\R}{\mathbb{R}}

\newtheorem{theoreme}{Th\'eor\`eme}
\newtheorem{proposition}{Proposition}
\newtheorem{lemme}{Lemme}

\newtheorem{remarque}{Remarque}

\usepackage{geometry}
\geometry{hmargin=2.75cm,vmargin=1.5cm}
\usepackage{url}
\usepackage{hyperref}

\begin{document}

\title{Preuve simplifiée du théorème de Serret sur les nombres équivalents}

\author{A. Bauval}\thanks{Anne Bauval, bauval@math.univ-toulouse.fr, IMT, UMR 5219, Universit\'e Toulouse~III}\thanks{\rm 2010 Mathematics Subject Classification: 11A55}

%\today

\begin{abstract}

\noindent
\emph{Si les fractions continues de deux irrationnels $x$ et $y$ ont un quotient complet commun alors $x$ et $y$ sont équivalents, c'est-à-dire qu'il existe $a,b,c,d\in\Z$ tels que $ad-bc=\pm1$ et $y=\frac{ax+b}{cx+d}$. La réciproque, due à Serret, est désormais classique, mais on en donne une preuve plus simple et purement algébrique.}
\end{abstract}

\maketitle

\section{Préliminaires}\label{sec:Prelim}

Un irrationnel $x$, décrit par sa fraction continue
$$x=[a_0,a_1,\dots]=a_0+\cfrac1{a_1+\cfrac{1}{a_2+\dots}}\quad(\text{avec}\quad a_0\in\Z\quad\text{et}\quad a_n\in\N^*\quad\text{si}\quad n\ge1),$$
la suite $(a_n)$ de ses quotients partiels, et la suite $(x_n)$ de ses quotients complets, sont liés par :
$$\forall n\in\N\quad x=[a_0,\dots,a_{n-1},x_n]\quad\text{et}\quad x_n=[a_n,a_{n+1},\dots].$$
De plus, si $(p_n)$ et $(q_n)$ sont les deux suites d'entiers définies par
$$\begin{array}{ccc}p_{-2}=0,&p_{-1}=1,& p_n=a_np_{n-1}+p_{n-2},\\ q_{-2}=1,&q_{-1}=0,&q_n=a_nq_{n-1}+q_{n-2},\end{array}$$
pour tout $n\in\N$, on a
$$[a_0,\dots,a_n]=\frac{p_n}{q_n},\quad x=\frac{p_{n-1}x_n+p_{n-2}}{q_{n-1}x_n+q_{n-2}}\quad\text{et}\quad p_{n-1}q_{n-2}-p_{n-2}q_{n-1}=(-1)^n.$$
En particulier, $x$ est équivalent à tous les $x_n$, c'est-à-dire dans la même orbite pour l'action du groupe $G:=PGL_2(\Z)$ sur $\R\setminus\Q$ par homographies, ce que l'on notera : $x\in Gx_n$.

On notera d'autre part $\sim$ la relation d'équivalence sur les irrationnels ``avoir un quotient complet commun''. Autrement dit, pour tous irrationnels $x=[a_0,a_1,\dots]$ et $y=[b_0,b_1,\dots]$ :
$$\begin{array}{ccccc}
x\sim y&\Leftrightarrow&\exists i,j\in\N&&x_i=y_j\\
&\Leftrightarrow&\exists i,j\in\N&\forall k\in\N&a_{i+k}=b_{j+k}\\
&\Leftrightarrow&\exists i,j\in\N&\forall k\in\N&x_{i+k}=y_{j+k}.
\end{array}$$

Ces deux relations d'équivalence sur $\R\setminus\Q$ sont en fait la même :
\begin{theoreme}\cite{Serret} Deux irrationnels $x$ et $y$ sont équivalents si et seulement si $x\sim y$.
\end{theoreme}

Il est clair que $x\sim y\Rightarrow y\in Gx$. Nous proposons une preuve de la réciproque, inspirée de \cite{Lachaud} et plus simple que la démonstration originelle reproduite dans tous les manuels (\cite{Perron, HardyWright, Lang, RockettSzusz, Vorobiev, Borwein}, etc.).

\section{Démonstration}

\begin{lemme}\label{lem:Opp}Pour tout irrationnel $x$, on a $-x\sim x$.
\end{lemme}
\begin{proof}(\cite{Perron})
$$-[a_0,a_1,x_2]=\begin{cases}[-a_0-1,1,a_1-1,x_2]&\text{si }a_1>1\\
[-a_0-1,x_2+1]&\text{si }a_1=1.\end{cases}$$
\end{proof}

\begin{lemme}\label{lem:Inv}Pour tout irrationnel $x$, on a $1/x\sim x$.
\end{lemme}
\begin{proof}
$$1/[a_0,x_1]=\begin{cases}[0,a_0,x_1]&\text{si }a_0\ge1\\
x_1&\text{si }a_0=0\end{cases}$$
(\cite{Lang}), ce qui démontre le cas $x>0$. Le cas $x<0$ s'en déduit grâce au lemme précédent.
\end{proof}

\begin{proposition}\label{prop:Serret} Si deux irrationnels $x$ et $y$ sont équivalents alors $x\sim y$.
\end{proposition}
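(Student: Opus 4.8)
The plan is to reduce the general equivalence $y \in Gx$ to a small list of generators of $G = PGL_2(\Z)$ and check the claim $y \sim x$ on each generator, using the two lemmas already proven. Concretely, $PGL_2(\Z)$ is generated by the transformations $x \mapsto x+1$, $x \mapsto -x$ and $x \mapsto 1/x$; since $\sim$ is an equivalence relation, it suffices to show that applying any one of these to $x$ produces an irrational $\sim$-equivalent to $x$. The cases $x \mapsto -x$ and $x \mapsto 1/x$ are exactly Lemmes~\ref{lem:Opp} and~\ref{lem:Inv}, so the only thing left is the translation $x \mapsto x+1$.

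First I would record the (standard, purely algebraic) generation statement for $PGL_2(\Z)$: every element is a finite composite of the maps $T:x\mapsto x+1$, $S:x\mapsto 1/x$ and $x\mapsto -x$. One clean way is to note $PGL_2(\Z)$ is generated by $S$ and $T$ together with the class of $\mathrm{diag}(1,-1)$ (which acts as $x\mapsto -x$); alternatively one can invoke that $GL_2(\Z)$ is generated by the elementary matrices $\begin{pmatrix}1&1\\0&1\end{pmatrix}$, $\begin{pmatrix}1&0\\1&1\end{pmatrix}$ and $\mathrm{diag}(1,-1)$, and that $\begin{pmatrix}1&0\\1&1\end{pmatrix}$ acts as $x\mapsto 1/(1/x+1)$, hence is built from $S$ and $T$. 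Then I would handle $x\mapsto x+1$: if $x=[a_0,a_1,\dots]$ then $x+1=[a_0+1,a_1,a_2,\dots]$, so $x$ and $x+1$ share the complete quotient $x_1=[a_1,a_2,\dots]$, giving $x+1\sim x$ immediately.

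The main obstacle, such as it is, is purely bookkeeping: one must make sure the chosen generating set really does generate $PGL_2(\Z)$ (not merely $PSL_2(\Z)$), which is why the reflection $x\mapsto -x$ — and hence Lemme~\ref{lem:Opp} — is genuinely needed; and one must be slightly careful that at each step of the composite the intermediate value stays irrational (automatic, since homographies with integer coefficients and nonzero determinant preserve $\R\setminus\Q$) so that the complete-quotient language applies throughout. Once these points are in place, the proof is a three-line induction on the word length of the group element: for $g = g_1\cdots g_r$ with each $g_i\in\{x\mapsto x+1,\ x\mapsto -x,\ x\mapsto 1/x\}$, write $y = g_1(g_2\cdots g_r \cdot x)$, apply the relevant lemma (or the translation observation) to get $y\sim g_2\cdots g_r\cdot x$, and conclude by transitivity of $\sim$ and the induction hypothesis. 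Hence $y\in Gx \Rightarrow y\sim x$, which together with the trivial converse noted in Section~\ref{sec:Prelim} proves the proposition.
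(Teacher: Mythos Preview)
Your argument is correct: once one knows that $PGL_2(\Z)$ is generated by $T:x\mapsto x+1$, $U:x\mapsto 1/x$ and $V:x\mapsto -x$, checking each generator against $\sim$ (Lemmes~\ref{lem:Opp}, \ref{lem:Inv}, and your translation observation $x+1=[a_0+1,a_1,\dots]$) and inducting on word length finishes the job.

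The paper proceeds differently and, in a sense, in the opposite logical direction. Rather than \emph{assuming} a generating set for $PGL_2(\Z)$, it takes an arbitrary $\begin{pmatrix}a&b\\c&d\end{pmatrix}$ with $c>0$, expands $a/c$ as a finite continued fraction $[a_0,\dots,a_{n-1}]$ of the parity matching $\pm1=ad-bc$, and identifies $a=p_{n-1}$, $c=q_{n-1}$, $b=rp_{n-1}+p_{n-2}$, $d=rq_{n-1}+q_{n-2}$ for some $r\in\Z$. This yields directly $y=[a_0,\dots,a_{n-1},x+r]$, and one then peels off the $a_i$ one by one using Lemme~\ref{lem:Inv}. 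As the Remarque following the proof points out, this argument \emph{proves} the generation of $PGL_2(\Z)$ by $T,U,V$ (with an explicit normal form) rather than consuming it. Your approach is cleaner if one is willing to import the generation fact (your sketch via elementary matrices and row reduction is fine, though it is essentially the Euclidean algorithm on the first column --- i.e.\ the same continued-fraction computation the paper performs explicitly); the paper's approach is fully self-contained and more constructive.
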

\begin{proof}
Soient $a,b,c,d\in\Z$ tels que $ad-bc=\pm1$ et $y=\frac{ax+b}{cx+d}$. Quitte à remplacer si nécessaire $(a,b,c,d)$ par son opposé (ce qui ne modifie pas $y$), on peut de plus supposer que $(c,d)=(0,1)$ ou $c>0$. Dans le premier cas, $y=\pm x+b\sim\pm x\sim x$ d'après le lemme \ref{lem:Opp}. Supposons maintenant $c>0$. Le rationnel $a/c$ admet deux développements en fraction continue finie, chacun se déduisant de l'autre en raccourcissant ou rallongeant artificiellement ce dernier de $1$. Choisissons celui,
$$\frac ac=[a_0,\dots,a_{n-1}]$$
(avec $n\ge1$), pour lequel la parité de $n$ est telle que $ad-bc=(-1)^n$. Les suites $(p_i), (q_i)_{i<n}$ étant définies comme dans la section \ref{sec:Prelim}, on a $\frac ac=\frac{p_{n-1}}{q_{n-1}}$ et même (puisque ces deux fractions sont irréductibles et de dénominateurs positifs)
$$a=p_{n-1}\quad\text{et}\quad c=q_{n-1}.$$
Puisque de plus $ad-bc=(-1)^n=p_{n-1}q_{n-2}-p_{n-2}q_{n-1}$, il existe un entier $r$ tel que
$$b=rp_{n-1}+p_{n-2}\quad\text{et}\quad d=rq_{n-1}+q_{n-2}$$
donc
$$y=\frac{ax+b}{cx+d}=\frac{p_{n-1}(x+r)+p_{n-2}}{q_{n-1}(x+r)+q_{n-2}}=[a_0,\dots,a_{n-1},x+r].$$
On conclut grâce au lemme \ref{lem:Inv} et à l'invariance de $\sim$ par translations entières :
$$y\sim[a_1,\dots,a_{n-1},x+r]\sim[a_2,\dots,a_{n-1},x+r]\sim\dots\sim x+r\sim x.$$
\end{proof}

\begin{remarque}{\rm On vient en fait de redémontrer le théorème suivant, classique\footnote{Plus précisément (\cite{Coxeter}) : $$PGL_2(\Z)=\langle T,U,V\mid U^2=V^2=(TV)^2=(UV)^2=(TUV)^3=1\rangle.$$} et un peu plus fort que la proposition \ref{prop:Serret} :}\\
\indent le groupe $PGL_2(\Z)$ est engendré par les trois éléments $T,U,V$\footnote{Ou même seulement par $T$ et $U$, puisque -- cf. preuve du lemme \ref{lem:Opp} -- $UV=T^{-1}UTUT^{-1}$. Plus précisément :
$$PGL_2(\Z)=\langle T,U\mid U^2=(UTUT^{-2})^2=(UTUT^{-1})^3=1\rangle.$$}
correspondant respectivement aux homographies
$$t:x\mapsto1+x,\quad u:x\mapsto1/x\quad\text{et}\quad v:x\mapsto -x,$$
{\rm tout en le précisant :}\\
\indent tout élément de $PGL_2(\Z)$ s'écrit sous la forme$$V^eT^{a_0}UT^{a_1}U\dots UT^{a_n}$$où tous les $a_i$ sont strictements positifs sauf éventuellement le premier et le dernier, et $e$ est égal à $0$, ou éventuellement à $1$ si $n=0$.\\
{\rm De plus, cette écriture est unique.}
\end{remarque}

\noindent\author{Anne Bauval,} bauval@math.univ-toulouse.fr\\
\address{\small Institut de Math\'ematiques de Toulouse\\
Universit\'e Toulouse III\\
118 Route de Narbonne, 31400 Toulouse - France\\
}


\begin{thebibliography}{999}
\bibitem[Borwein et al. 2014]{Borwein} Jonathan Borwein, Alf van der Poorten, Jeffrey Shallit et Wadim Zudilin, \emph{Neverending Fractions -- An Introduction to Continued Fractions}, Cambridge University Press, 2014, p.~38-39
\bibitem[Coxeter et Moser 1957]{Coxeter} Harold S. M. Coxeter et William O. J. Moser, \emph{Generators and Relations for Discrete Groups}, Springer, 1957, chap. 7, \S 2
\bibitem[Hardy et Wright 1938]{HardyWright} Godfrey H. Hardy et Edward M. Wright (trad. de l'anglais par F. Sauvageot), \emph{Introduction à la théorie des nombres} [« An Introduction to the Theory of Numbers », 1938], Vuibert-Springer, 2007, théorèmes 172 p.~179-180 et 175 p. 182-183
\bibitem[Lachaud 1988]{Lachaud} Gilles Lachaud, « \href{http://iml.univ-mrs.fr/editions/biblio/files/lachaud/1988e.pdf}{Continued fractions, binary quadratic forms, quadratic fields, and zeta functions} », in \emph{Algebra and Topology 1988}, Korea Inst. Tech., Taejon, 1988, p. 1-56 : propositions 4 et 5, p. 8-10
\bibitem[Lang 1966]{Lang} Serge Lang, \emph{Introduction to Diophantine Approximations}, Addison-Wesley, 1966, chap. 1\bibitem[Perron 1913]{Perron} Oskar Perron, \href{https://archive.org/stream/dielehrevondenk00perrgoog#page/n60}{\emph{Die Lehre von den Kettenbrüchen}}, Teubner, 1913, Satz 13 p. 47 et Satz 23 p. 63-65
\bibitem[Rockett et Szüsz 1992]{RockettSzusz} Andrew M. Rockett et Peter Szüsz, \emph{Continued Fractions}, World Scientific, 1992, th. 1 et 2 p.~5-7
\bibitem[Serret 1866]{Serret} Joseph-Alfred Serret, \href{https://archive.org/stream/coursdalgbresup06serrgoog#page/n57/mode/2up}{\emph{Cours d'algèbre supérieure, vol. 1}}, Gauthier-Villars, 1866, 3e éd., p. 34-37
\bibitem[Vorobiev 1992]{Vorobiev} Nicolai N. Vorobiev (trad. du russe par M. Martin), \emph{Fibonacci Numbers}  [« Chisla Fibonacci », 1992], Springer, 2002, p.~115-118
\end{thebibliography}
\end{document}